\theoremstyle{plain}
\newtheorem{theorem}{Theorem}
\newtheorem{corollary}[theorem]{Corollary}
\newtheorem{proposition}[theorem]{Proposition}
\theoremstyle{definition}
\newtheorem{example}[theorem]{Example}
\newcommand{\exampleqed}{\ensuremath{\ocircle}\par}
\newcommand{\ZZ}{\mathbb{Z}}			% The set of integers
\newcommand{\QQ}{\mathbb{Q}}			% The set of rationals
\newcommand{\symb}[1]{\mathtt{#1}}		% Symbol
\newcommand{\vast}{\bBigg@{4}}
\newcommand{\Vast}{\bBigg@{5}}
\newcommand{\define}[1]{\textbf{#1}}
\newcommand{\xConfig}[1]{%
	\begin{tikzpicture}[
		baseline=-\the\dimexpr\fontdimen22\textfont2\relax,ampersand replacement=\&]
		\matrix[
			matrix of math nodes,
			nodes={
				minimum size=1.4ex,text width=1.4ex,
				text height=1.4ex,inner sep=3pt,draw={gray!20},anchor=center
			}, row sep=1pt,column sep=1pt
		] (config) {#1};
		\node[draw,rectangle,help lines,gray!50, dashed,fit=(config),inner sep=-1pt] {};
	\end{tikzpicture}
}
\title{%
	Aperiodic subshifts of finite type on groups which are not finitely generated
}
\author{Sebasti\'an Barbieri}
\newcommand{\Addresses}{{
		\bigskip

		\hskip-\parindent   S.~Barbieri, \textsc{Departamento de Matem\'{a}tica y ciencia de la computaci\'{o}n, Universidad de Santiago de Chile, Santiago. Chile.}\par\nopagebreak
		\textit{E-mail address}: \texttt{sebastian.barbieri@usach.cl}
}}
\date{}
\begin{document}

\maketitle

\begin{abstract}
	We provide an example of a non-finitely generated group which admits a nonempty strongly aperiodic SFT. Furthermore, we completely characterize the groups with this property in terms of their finitely generated subgroups and the roots of their conjugacy classes. 
	
	\medskip
	
	\noindent
	\emph{Keywords: symbolic dynamics, subshift of finite type, aperiodicity.}
	
	\smallskip
	
	\noindent
	\emph{MSC2020:}
	37B10.	% Dynamical systems and ergodic theory ->  Topological dynamics -> Symbolic dynamics
\end{abstract}

%\section{Introduction}

Let $A$ be a finite set and $G$ a group with identity $1$. Consider the set of configurations $A^{G} = \{x \colon G \to A\}$ endowed with the left shift action $G \curvearrowright A^{G}$ given by \[ (gx)(h) = x(g^{-1}h) \mbox{ for every } x \in A^{G} \mbox{ and } g,h \in G.  \]

A set $X \subset A^{G}$ is called a \define{subshift of finite type} (SFT) if there exists a finite set $F \subset G$ and $L \subset A^{F}$ such that $x \in X$ if and only if for every $g \in G$ we have that the restriction of $gx$ to $F$ is in $L$. An SFT can be though of as a set of colorings of $G$ using colors from $A$ which satisfy a finite set of local rules encoded by $L$. 
%Notice that for some choices of $L$, $X$ may be empty.

%For instance, if $G = \ZZ$, then $A = \{\symb{0},\symb{1}\}$, $F = \{0,1\}$ and $L = \{ w \in A^F : (w(0),w(1)) \neq (\symb{1},\symb{1})\}$ describe the SFT which consists of all bi-infinite sequences in $\{\symb{0},\symb{1}\}^{\ZZ}$ for which no two consecutive symbols take the value $\symb{1}$. This example is often called the golden mean shift or the hard-core shift.

An SFT $X$ is called \define{strongly aperiodic} (SA) if the shift action is free, namely, given $x \in X$ we have that $gx = x$ can only hold for $g = 1$. Clearly $X=\varnothing$ is SA, but this case is not very interesting. This definition begs the question of whether nonempty SA SFTs actually exist and the answer turns out to depend on which group $G$ we are considering. 

Let us illustrate the failure of the existence of nonempty SA SFT in the case $G = \ZZ$. Given $A,F$ and $L$ which define an SFT $X$ we may assume without loss of generality that $F$ is of the form $\{0,\dots,n-1\}$ for some $n \geq 1$. Consider the finite directed graph with vertices $A^{n+1}$ and such that there is an edge from $u_0\dots u_n$ to $v_0\dots v_n$ if and only if $u_1\dots u_n = v_0\dots v_{n-1} \in L$. It is clear that the elements of $X$ are precisely the projections of the bi-infinite walks in this graph to the first symbol in each coordinate (for further details, see~\cite{LindMarcus_2021_book}). If $X$ is nonempty, it follows that the graph must contain a cycle, say of length $m>0$, and consequently there must exist $x \in X$ for which $mx = x$, showing that $X$ is not SA.

The case of $G = \ZZ^2$ is much more interesting. In~\cite{Wang1961} Wang studied the algorithmic problem of deciding, given as inputs $A,F$ and $L$ whether the SFT $X$ they define is empty or not\footnote{To be rigorous, Wang studied an equivalent problem where the SFT is described by square tiles with colored edges and in any tiling of $\ZZ^2$ the colors at the edges must match.}. Wang did not solve this problem but showed that if $\ZZ^2$ does not admit any nonempty SA SFT, then the problem would be algorithmically decidable. Years later Berger solved the problem~\cite{Berger1966} showing that it was algorithmically undecidable, and in doing so constructed the first known example of a nonempty SA SFT in $\ZZ^2$. After this breakthrough, several beautiful SA SFTs on $\ZZ^2$ have been constructed, see~\cite{JeandelRao11Wang,Kari1996259,Robinson1971}.

In recent years the problem of classifying the groups which admit a nonempty SA SFT has gained notoriety. This has led to wonderful discoveries that relate this property with the algorithmic and geometric properties of groups. For instance, the property of admitting a nonempty SA SFT is invariant under commensurability~\cite{CarrollPenland} and under quasi-isometries of finitely presented groups~\cite{Cohen2014}. It is also known that finitely generated groups with infinitely many ends (in particular, virtually free groups) cannot admit nonempty SA SFTs~\cite{Cohen2014}, and that finitely generated and recursively presented groups which admit SA SFTs must necessarily have decidable word problem~\cite{Jeandel2015}. Within the groups that satisfy these constraints, several are known to admit SA SFTs. For instance polycyclic-by-finite groups which are not virtually cyclic~\cite{BallierStein2018,Jeandel2015_2}, one-ended word-hyperbolic groups~\cite{CohenGoodmanS2015,CohenGSRieck_2021}, some Baumslag-Solitar groups~\cite{AubrunKari2013,AubrunKari2021,EsnayMoutot2022}, groups of the form $\ZZ^d \rtimes_{\varphi} G$ with $d\geq 2$, $\varphi \in \operatorname{GL}_d(\ZZ)$ and $G$ infinite, finitely generated and with decidable word problem~\cite{BS2018}, groups which are the direct product of three infinite, finitely generated groups with decidable word problem~\cite{Barbieri_2019_DA}, the Grigorchuk group, and more generally, any finitely generated branch group with decidable word problem~\cite{Barbieri_2019_DA}, and any self-simulable group with decidable word problem such as Thompson's $V$, $\operatorname{GL}_n(\ZZ)$ and $\operatorname{SL}_n(\ZZ)$ for $n \geq 5$, or the direct product of any two finitely generated non-amenable groups with decidable word problem~\cite{BaSaSa_2021}.

A common point shared by all the existence results mentioned above is that they apply to finitely generated groups. For obvious reasons this is not very surprising: if $X\subset A^G$ is an SFT on a group $G$ described by $L\subset A^F$ and $H = \langle F \rangle$ is the subgroup of $G$ finitely generated by $F$, we may consider instead $Y \subset A^H$ as the SFT which is defined by $L$ but on $H$. It turns out that configurations in $X$ can be seen as independent copies of configurations of $Y$ on each coset of $H$. 

\begin{proposition}\label{prop}
	Let $H \leqslant G$, $F\subset H$ finite, $L\subset A^F$ and let $X,Y$ be the SFTs defined by $L$ on $G$ and $H$ respectively. Let $(g_i)_{i \in I}$ be a set of left coset representatives of $H$ in $G$. Then $x \in X$ if and only if for every $i \in I$ we have $y_i = (x(g_ih))_{h \in H} \in Y$.
\end{proposition}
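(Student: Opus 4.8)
The plan is to unwind both definitions and to observe that the defining condition for $X$, quantified over $g \in G$, factors through the bijection $G \cong I \times H$, $g_i h \leftrightarrow (i,h)$, afforded by the coset decomposition $G = \bigsqcup_{i \in I} g_i H$. First I would record a convenient reformulation of membership: for $x \in A^G$ and $g \in G$, the restriction of $gx$ to $F$ is the map $f \mapsto x(g^{-1}f)$, so $x \in X$ if and only if $(x(g^{-1}f))_{f \in F} \in L$ for every $g \in G$; since $g \mapsto g^{-1}$ is a bijection of $G$, this is equivalent to $(x(gf))_{f \in F} \in L$ for every $g \in G$. Running the same computation inside $H$ (which is meaningful precisely because $F \subset H$) gives that $y \in Y$ if and only if $(y(hf))_{f \in F} \in L$ for every $h \in H$.

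Next I would take an arbitrary $g \in G$, write it uniquely as $g = g_i h$ with $i \in I$ and $h \in H$, and note that because $F \subset H$ every product $hf$ with $f \in F$ again lies in $H$, so that
\[
(x(gf))_{f \in F} = (x(g_i h f))_{f \in F} = (y_i(hf))_{f \in F},
\]
where $y_i \in A^H$ is the configuration $h' \mapsto x(g_i h')$ of the statement and each entry $y_i(hf)$ is well-defined.

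Combining the two observations yields both implications at once: $x \in X$ iff $(x(gf))_{f \in F} \in L$ for every $g \in G$ iff $(y_i(hf))_{f \in F} \in L$ for every $i \in I$ and every $h \in H$ (using that $(i,h) \mapsto g_i h$ maps onto $G$) iff $y_i \in Y$ for every $i \in I$. I do not expect a genuine obstacle here; the only care required is bookkeeping with the left-shift convention and the coset representatives — checking that $\{g_i h : i \in I,\ h \in H\}$ is indeed all of $G$ (only surjectivity is used) and that every product $g_i h f$ above has $hf \in H$, so that the configurations $y_i$ are well-defined elements of $A^H$. It is worth remarking that this last point is exactly why $F$ must be taken inside $H$: this hypothesis is what lets the single pattern set $L \subset A^F$ be read simultaneously as a local rule on $G$ and as a local rule on $H$.
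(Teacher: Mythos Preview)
Your proof is correct and is essentially the same unwinding-of-definitions argument as the paper's; the only cosmetic difference is that you first pass to the equivalent condition $(x(gf))_{f\in F}\in L$ for all $g\in G$ via the inversion bijection and then run a single chain of equivalences, whereas the paper keeps the shift notation and treats the two implications separately. No substantive deviation.
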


\begin{proof}
	Let $x \in X$ and $i \in I$. For every $u \in H$ we have $(uy_i)(h) = y_i(u^{-1}h) = x(g_iu^{-1}h) = (ug_i^{-1}x)(h)$. As $x \in X$, it follows that $(ug_i^{-1}x)|_F \in L$ and thus $(uy_i)|_F \in L$ and $y_i \in Y$. Conversely, suppose $y_i \in Y$ for every $i \in I$ and let $g \in G$. We may choose $i \in I$ and $v \in H$ such that $g^{-1} = g_iv$ and thus we have $(gx)(h) = (v^{-1}g_i^{-1}x)(h) = x(g_ivh) = y_i(vh) = (v^{-1}y_i)(h)$ for every $h \in H$. As $y_i \in Y$, it follows that $v^{-1}y_i|_F \in L$ and thus $gx|_F \in L$, showing that $x \in X$.
\end{proof}
%\begin{enumerate}
%	\item $x \in X$ if and only if $gx|_F \in L \mbox{ for every } g \in G$.
%	\item $x \in X$ if and only if $gx|_H \in Y \mbox{ for every } g \in G$.
%	\item $x \in X$ if and only if $g_ix|_H \in Y \mbox{ for every } i \in I$, with $(g_iH)_{i \in I}$ a set of left cosets of $H$.
%\end{enumerate}
%%
%
% elements of $X\subset A^G$ can be described as independent copies of elements of $Y \subset A^G$ in each coset of $H$. Namely, if $(g_i)_{i \in I}$ is a set of left representatives then 
%\begin{align}
%	 X & = \{x \in A^G : \{ x(gf)\}_{f \in F} \in L \mbox{ for every } g \in G  \}\\
%	 & = \{ x \in A^G : \left\lbrace x(gh)\right\rbrace_{h \in H} \in Y \mbox{ for every } g \in G \}\\	 & = \{ x \in A^G : \left\lbrace x(g_ih)\right\rbrace_{h \in H} \in Y \mbox{ for every } i \in I \}.
%\end{align}

%The third expression states that an SFT on a non-finitely generated group $G$ can be described as independent copies of configurations of some fixed SFT defined in one of its finitely generated subgroups on each coset.

Based on the former argument, one should expect that non-finitely generated groups would have a hard time admitting nonempty SA SFTs, as one could always pick the same configuration on each coset and use that to create periodicity. Let us exemplify that this intuition is wrong with a real world situation.

Let us suppose the reader is walking on the street, while suddenly they get jumped by a thief who demands: ``Quickly! give me an example of a non-finitely generated group which admits a nonempty SA SFT or I shall take your wallet!''. The reader, scared by the gravity of the situation, might wrongly answer that such examples do not exist by explaining the argument above. The thief, with a victorious grin on their face, would take out their portable blackboard and write down the following example.

\begin{example}
	Consider $G = \QQ^2$ and let $Y\subset A^{\ZZ^2}$ be a nonempty SA SFT. Let $X \subset A^{\QQ^2}$ be given by the condition \[ x \in X \mbox{ if and only if } (sx)|_{\ZZ^2} \in Y, \mbox{ for every } s \in \QQ^2.  \]
	As $Y$ is a nonempty SFT, it follows that $X$ is a nonempty SFT. For $q \in \QQ^2$, we may write $q = \left( \frac{p_1}{r_1},\frac{p_2}{r_2} \right)$ with $p_1,p_2,r_1,r_2 \in \ZZ$ and $r_1r_2\neq 0$. It follows that $r_1r_2q \in \ZZ^2$. Now suppose we have $qx = x$, then we also have $(r_1r_2q)x = x$ and from the fact that $x|_{\ZZ^2} \in Y$, we obtain that necessarily $r_1r_2q = (0,0)$. As $r_1r_2 \neq 0$, we deduce that $q = (0,0)$ and thus $X$ is SA.\hfill\exampleqed
\end{example}

%\begin{example}
%	Consider $G = \QQ^2$ and let $R\subset A^{\ZZ^2}$ be a nonempty SA SFT. Let $X \subset A^{\QQ^2}$ be given by the condition \[ x \in X \mbox{ if and only if } \left\lbrace x(s+n)\right\rbrace_{n \in \ZZ^2} \in R, \mbox{ for every } s \in \QQ^2.  \]
%	It is clear that $X$ is a nonempty SFT. Moreover if $q \in \QQ^2$, we may write $q = \left( \frac{p_1}{r_1},\frac{p_2}{r_2} \right)$ with $p_1,p_2,r_1,r_2 \in \ZZ$ and $r_1r_2\neq 0$. It follows that $r_1r_2q \in \ZZ^2$. Now suppose we have $qx = x$, then we also have $r_1r_2qx = x$ and from the fact that $(x(n))_{n \in \ZZ^2} \in R$, we obtain that necessarily $r_1r_2q = (0,0)$. As $r_1r_2 \neq 0$, we deduce that $q = (0,0)$ and thus $X$ is strongly aperiodic.\hfill\exampleqed
%\end{example}

Faced with this example, the reader would have no other choice but to surrender their wallet to the thief. The argument for nonexistence sketched above is incomplete: while it is true that one may choose independently any configuration in every coset, it may happen that powers of the coset representatives end up inevitably falling on a non-trivial element of the finitely generated subgroup and thus destroy any global translational symmetry. As we shall show, it turns out that modulo conjugacy this is essentialy the sole way that SA SFTs can arise in non-finitely generated groups. In fact, we shall show that global aperiodicity may arise even if the subshift in the finitely generated subgroup is not strongly aperiodic.

For an SFT $X\subset A^{G}$, define its \define{free part} as the set \[ \operatorname{Free}(X) = G \setminus \bigcup_{x \in X}\operatorname{Stab}_G(x) = \{ g \in G : gx \neq x \mbox{ for every } x \in X\}.  \]

In particular, a nonempty SFT $X$ is SA if and only if $\operatorname{Free(X)}= G \setminus \{1\}$. For a subset $M\subset G$, let us denote the set of its \define{roots} in $G$ by \[ R_G(M) = \{ g \in G : \mbox{ there is } n >0 \mbox{ such that } g^n \in M \}. \]
Finally, given $g \in G$ let us denote its conjugacy class by $\operatorname{Cl}(g) = \{ tgt^{-1} : t \in G \}$.

%We shall use the notion of isolator introduced by Kantorovich~\cite{Kontorovich1943_def_isolator}. Given a group $G$ and $M \subset G$, the \define{isolator} $\operatorname{Iso}_G(M)$ is the smallest subgroup of $G$ which contains all roots of elements of $M$ in $G$, namely \[ \operatorname{Iso}_G(M) = \langle g \in G : \mbox{there is } n >0 \mbox{ such that } g^n \in M\rangle. \]

%For instance, it is clear that $\operatorname{Iso}_\QQ(\ZZ) = \QQ$. But we may also choose any $M \subset \ZZ$ with $M \notin \{ \varnothing, \{0\}  \}$ and obtain $\operatorname{Iso}_{\QQ}(M) = \QQ$.

\begin{proposition}\label{prop2}
	Let $H \leqslant G$, $F\subset H$ finite, $L\subset A^F$ and let $X,Y$ be nonempty SFTs defined by $L$ on $G$ and $H$ respectively. We have that $g \in \operatorname{Free}(X)$ if and only if 
	\[ \operatorname{Cl}(g) \cap R_G(\operatorname{Free}(Y)) \neq \varnothing.  \]
\end{proposition}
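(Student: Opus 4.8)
The plan is to prove the two implications separately, in both cases using Proposition~\ref{prop} to encode membership in $X$ via the ``slices'' of a configuration along the left cosets of $H$, together with the way $g$ permutes these cosets by left multiplication.

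For the \emph{if} direction, suppose $tg^nt^{-1}\in\operatorname{Free}(Y)$ for some $t\in G$ and $n>0$. Since $\operatorname{Free}(Y)\subseteq H$, the element $h\isdef tg^nt^{-1}$ lies in $H$. If $g\notin\operatorname{Free}(X)$, fix $x\in X$ with $gx=x$; then $g^nx=x$, and because $X$ is shift-invariant $tx\in X$ and $h(tx)=(tg^nt^{-1})(tx)=tg^nx=tx$. Restricting the equality $h(tx)=tx$ to $H$ is meaningful since $h\in H$, and by Proposition~\ref{prop} the slice $y\isdef((tx)(u))_{u\in H}$ belongs to $Y$; the restricted equality becomes $hy=y$, contradicting $h\in\operatorname{Free}(Y)$. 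Hence $g\in\operatorname{Free}(X)$.

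For the \emph{only if} direction I argue by contraposition: assuming $\operatorname{Cl}(g)\cap R_G(\operatorname{Free}(Y))=\varnothing$, I build $x\in X$ with $gx=x$. Fix left coset representatives $(g_i)_{i\in I}$ of $H$ in $G$. By Proposition~\ref{prop}, choosing $x$ is the same as choosing a slice $y_C\in A^H$ for every left coset $C=g_CH$, and $x\in X$ iff each $y_C\in Y$. A short computation with the shift action shows that $gx=x$ holds iff, for every coset $C$, one has $y_C=w_C^{-1}y_{g^{-1}C}$ where $w_C\isdef g_{g^{-1}C}^{-1}g^{-1}g_C\in H$; in particular these constraints only couple slices lying in a common $\langle g\rangle$-orbit of cosets. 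On an infinite orbit the cosets are pairwise distinct, so I fix an arbitrary element of $Y$ as the slice of one of them and propagate in both directions using the relation above; each step multiplies the previous slice by an element of $H$, and since $Y$ is $H$-invariant all slices obtained lie in $Y$, with no wrap-around condition to satisfy. On a finite orbit through $C$, say of length $m$ (so $g^mg_CH=g_CH$, hence $g_C^{-1}g^mg_C\in H$), composing the relation around the cycle and telescoping the product $w_{g^{-(m-1)}C}\cdots w_{g^{-1}C}w_C$ shows it equals $(g_C^{-1}g^mg_C)^{-1}$, so the cycle closes up iff the starting slice $y_C\in Y$ is fixed by $g_C^{-1}g^mg_C$. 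Now $g_C^{-1}g^mg_C=(g_C^{-1}gg_C)^m$ with $g_C^{-1}gg_C\in\operatorname{Cl}(g)$; if this power belonged to $\operatorname{Free}(Y)$ then $g_C^{-1}gg_C$ would lie in $R_G(\operatorname{Free}(Y))\cap\operatorname{Cl}(g)$, contrary to assumption, so $g_C^{-1}g^mg_C\in H\setminus\operatorname{Free}(Y)=\bigcup_{y\in Y}\operatorname{Stab}_H(y)$ and an appropriate $y_C$ exists. Propagating it around the cycle keeps all slices in $Y$ and, by the choice of $y_C$, closes up consistently. Collecting the slices over all $\langle g\rangle$-orbits of cosets produces the required $x\in X$ with $gx=x$, so $g\notin\operatorname{Free}(X)$.

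The short \emph{if} direction and the infinite-orbit case are routine; I expect the main work to be purely organizational, namely setting up the slice-to-configuration correspondence cleanly for an arbitrary choice of coset representatives and carrying out the telescoping on a finite orbit to identify the governing group element as a conjugate of $g^m$ lying in $H$ — keeping track of inverses and of the order of multiplication is the only delicate point.
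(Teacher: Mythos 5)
Your proof is correct and follows essentially the same route as the paper: the forward implication via translating a $g$-fixed configuration and restricting a conjugate power to $H$, and the converse by decomposing the left cosets into $\langle g\rangle$-orbits, propagating a slice freely along infinite orbits, and using the hypothesis $\operatorname{Cl}(g)\cap R_G(\operatorname{Free}(Y))=\varnothing$ to find a slice fixed by the conjugate $g_C^{-1}g^mg_C\in H$ that closes up each finite orbit. Your telescoped product $w_{g^{-(m-1)}C}\cdots w_C$ is exactly the paper's element $h_{m,j}$ (up to inversion), so the two arguments coincide up to notation.
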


\begin{proof}

	If $g \notin \operatorname{Free}(X)$, then there exists $x \in X$ such that $gx=x$. Suppose there is $n >0$, $h \in H$ and $t \in G$ such that $g^n =t^{-1}ht$. It follows that we would have $t^{-1}htx = g^nx = x$ and thus $htx = tx$. Letting $z = tx$, it follows that $hz = z$. As $z \in X$, it follows that if we let $y=z|_{H} \in Y$ we have $hy = y$ and thus $h \notin \operatorname{Free}(Y)$. This shows that $\operatorname{Cl}(g) \cap R_G(\operatorname{Free}(Y)) = \varnothing$.
	
	%Conversely, suppose $G$ admits a nonempty SA SFT $X\subset A^G$ which is described by a finite $F\subset G$ and $L\subset A^F$. Let $H =\langle F \rangle \leqslant G$ be the subgroup of $G$ generated by $F$. As $F$ is finite, it follows that $H$ is finitely generated. Let $Y\subset A^H$ be the SFT given by $L$ on $H$. 
	
	Conversely, let $g \in G$ be such that $\operatorname{Cl}(g) \cap R_G(\operatorname{Free}(Y)) = \varnothing$ and choose (using the axiom of choice) a set $(\ell_i)_{i \in I}$ of left coset representatives of $H$. There is a unique well-defined permutation $\varphi \colon I \to I$ which satisfies $g\ell_i = \ell_{\varphi(i)}h$ for some $h \in H$. In particular, $\varphi$ induces an action of $\ZZ$ on $I$ given by $m \cdot i = \varphi^m(i)$ for $m \in \ZZ$ and $i \in I$.
	
	Choose (using the axiom of choice) $J \subset I$ such that it contains exactly one representative of each orbit of $\varphi$, that is, for every $i \in I$ there is a unique $j \in J$ such that $i = \varphi^m(j)$ for some (non necessarily unique) $m \in \ZZ$. For $n \in \ZZ$ and $j \in J$ let $h_{n,j} = \ell_{\varphi^n(j)}^{-1}g^{n}\ell_j \in H.$

	We will now define a configuration $x \in A^G$. Let $y^* \in Y$ be a fixed configuration and let $j \in J$, there are two cases to consider:
	
	\begin{enumerate}
		\item If $\{ \varphi^m(j) \}_{m \in \ZZ}$ is infinite, we let $y_j = y^*$ and define $x(\ell_{\varphi^m(j)} s) = h_{m,j}y_j(s) = y_j( h^{-1}_{m,j}s)$ for every $s \in H$ and $m \in \ZZ$.
		\item If $\{ \varphi^m(j) \}_{m \in \ZZ}$ is finite, let $n >0$ be the least positive integer such that $\varphi^n(j)=j$. As $h_{n,j} = \ell_{j}^{-1}g^{n}\ell_j \in \operatorname{Cl}(g^n)$ it follows by our assumption that $h_{n,j}\notin \operatorname{Free}(Y)$. Therefore there exists $y_j \in Y$ such that $h_{n,j}y_j = y_j$. We define $x(\ell_{\varphi^m(j)} s) = h_{m,j}y_j(s) = y_j( h_{m,j}^{-1}s)$ for every $s \in H$ and $m \in \ZZ$. Notice that as $h_{n,j}y_j = y_j$, this is well-defined.
	\end{enumerate}
	
	By construction, we have that $(x(\ell_i h))_{h \in H} \in Y$ for every $i \in I$ and thus by~\Cref{prop} we have that $x \in X$. Let us show that $gx = x$. Indeed, for $j \in J, m \in \ZZ$ and $s \in H$ we have
	\[ gx(\ell_{\varphi^n(j)}s)  = x(g^{-1}\ell_{\varphi^n(j)}s) = x(\ell_{\varphi^{n-1}(j)}\ell_{\varphi^{n-1}(j)}^{-1}g^{-1}\ell_{\varphi^n(j)}s) \]
	As $\ell_{\varphi^{n-1}(j)}^{-1}g^{-1}\ell_{\varphi^n(j)} \in H$, we obtain that $x(\ell_{\varphi^{n-1}(j)}\ell_{\varphi^{n-1}(j)}^{-1}g^{-1}\ell_{\varphi^n(j)}s) = y_j(h_{n-1,j}^{-1}\ell_{\varphi^{n-1}(j)}^{-1}g^{-1}\ell_{\varphi^n(j)}s)$. From here we obtain that \[ gx(\ell_{\varphi^n(j)}s)  = y_j(h_{n-1,j}^{-1}\ell_{\varphi^{n-1}(j)}^{-1}g^{-1}\ell_{\varphi^n(j)}s)  = y_j(h_{n,j}^{-1}s)  = x(\ell_{\varphi^n(j)}s)\]
	% \begin{align}gx(\ell_{\varphi^n(j)}s) & = x(g^{-1}\ell_{\varphi^n(j)}s)\\ & = x(\ell_{\varphi^{n-1}(j)}\ell_{\varphi^{n-1}(j)}^{-1}g^{-1}\ell_{\varphi^n(j)}s)\\ &  = y_j(h_{n-1,j}^{-1}\ell_{\varphi^{n-1}(j)}^{-1}g^{-1}\ell_{\varphi^n(j)}s)\\ &  = y_j(h_{n,j}^{-1}s)\\ &  = x(\ell_{\varphi^n(j)}s).\end{align} 
	
	This shows that $gx = x$ and thus we conclude that $g \notin \operatorname{Free}(X)$. \end{proof}

The previous proposition tells us that we may determine whether a group admits a nonempty SA SFT by just looking at its finitely generated subgroups.

\begin{theorem}\label{thm}
	A group $G$ admits a nonempty strongly aperiodic subshift of finite type if and only if there exists a finitely generated subgroup $H \leqslant G$ and a nonempty SFT $Y\subset A^{H}$ such that for every $g \in G \setminus \{1\}$ we have 
	\[ \operatorname{Cl}(g) \cap R_G(\operatorname{Free}(Y)) \neq \varnothing.  \]
\end{theorem}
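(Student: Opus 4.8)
The plan is to derive the theorem as an essentially immediate consequence of \Cref{prop2}, with \Cref{prop} used only to transfer nonemptiness between $G$ and a finitely generated subgroup $H$. In both directions the argument is the same: fix the local rule $L$, compare the SFT it defines on $G$ with the one it defines on $H$, and read off the free parts via \Cref{prop2}. Recall that a nonempty SFT $Z$ is SA precisely when $\operatorname{Free}(Z) = G \setminus \{1\}$, and that $1 \notin \operatorname{Free}(Z)$ always; hence SA-ness of a nonempty $Z$ is equivalent to $g \in \operatorname{Free}(Z)$ for every $g \neq 1$.

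For the ``if'' direction, suppose $H \leqslant G$ is finitely generated and $Y \subset A^{H}$ is a nonempty SFT with the stated root--conjugacy property. Write $Y$ as the SFT on $H$ defined by some $L \subset A^{F}$ with $F \subset H$ finite, and let $X \subset A^{G}$ be the SFT defined by the same $L$ on $G$. First I would observe that $X \neq \varnothing$: by \Cref{prop}, placing a fixed $y \in Y$ on every left coset of $H$ (i.e. $x(g_i h) = y(h)$) produces a point of $X$. Then \Cref{prop2} applies and gives, for every $g \in G$, the equivalence $g \in \operatorname{Free}(X)$ iff $\operatorname{Cl}(g) \cap R_G(\operatorname{Free}(Y)) \neq \varnothing$; by hypothesis the right-hand side holds for all $g \neq 1$, so $\operatorname{Free}(X) = G \setminus \{1\}$ and $X$ is a nonempty SA SFT.

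For the ``only if'' direction, let $X \subset A^{G}$ be a nonempty SA SFT, defined by $L \subset A^{F}$ with $F$ finite, and put $H = \langle F \rangle$, a finitely generated subgroup containing $F$. Let $Y \subset A^{H}$ be the SFT defined by $L$ on $H$. Taking $1$ among the coset representatives, \Cref{prop} shows that the restriction to $H$ of any $x \in X$ lies in $Y$, so $Y \neq \varnothing$. Applying \Cref{prop2} to this pair $X,Y$, and using that $g \in \operatorname{Free}(X)$ for every $g \neq 1$, yields $\operatorname{Cl}(g) \cap R_G(\operatorname{Free}(Y)) \neq \varnothing$ for all such $g$, which is exactly the asserted property of $H$ and $Y$.

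I do not expect a real obstacle here: the proof is bookkeeping resting on \Cref{prop2}, which is where the substance lies. The only points needing care are that the hypotheses of \Cref{prop} and \Cref{prop2} (finiteness of $F$, the inclusion $F \subset H$, and nonemptiness of \emph{both} $X$ and $Y$) are genuinely verified in each direction, and that one may freely pass between ``$X$ is SA'' and ``$\operatorname{Free}(X) = G \setminus \{1\}$''.
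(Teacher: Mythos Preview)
Your proposal is correct and follows essentially the same route as the paper: in each direction you fix the defining data $F,L$, compare the SFTs they induce on $G$ and on $H=\langle F\rangle$ (or the given finitely generated $H\supset F$), and invoke \Cref{prop2} to translate between $\operatorname{Free}(X)=G\setminus\{1\}$ and the root--conjugacy condition on $\operatorname{Free}(Y)$. The only difference is cosmetic: you spell out via \Cref{prop} why $X$ and $Y$ are nonempty, whereas the paper asserts this more tersely.
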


\begin{proof}
	If $X\subset A^G$ is a nonempty SA SFT given by $F\subset G$ finite and $L\subset A^F$, then $H =\langle F \rangle$ is a finitely generated subgroup and by~\Cref{prop2} for every $g \in \operatorname{Free}(X) = G \setminus \{1\}$ we have $\operatorname{Cl}(g) \cap R_G(\operatorname{Free}(Y)) \neq \varnothing$, where $Y\subset A^H$ is the nonempty SFT given by $L$ on $H$. Conversely, if $H\leq G$ is a finitely generated subgroup, $Y\subset A^H$ a nonempty SFT given by $F\subset H$ finite and $L\subset A^F$, and such that for every $g \in G \setminus \{1\}$ we have $\operatorname{Cl}(g) \cap R_G(\operatorname{Free}(Y)) \neq \varnothing$, then again by~\Cref{prop2} we have that $G\setminus \{1\}\subset \operatorname{Free}(X)$ for the nonempty SFT $X\subset A^G$ induced by $L$ on $G$. Thus $X$ is SA.
\end{proof}

\begin{corollary}
	Let $G$ be a group and $H \leqslant G$ a finitely generated subgroup which admits a nonempty strongly aperiodic SFT and such that for every $g \in G\setminus \{1\}$ we have $\operatorname{Cl}(g) \cap R_G(H \setminus \{1\}) \neq \varnothing$. Then $G$ admits a nonempty strongly aperiodic SFT.
\end{corollary}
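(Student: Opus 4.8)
The plan is to deduce the statement directly from \Cref{thm}. By hypothesis $H$ is a finitely generated subgroup of $G$ admitting a nonempty strongly aperiodic SFT; I would fix such an SFT $Y \subset A^{H}$, say described by a finite $F \subset H$ and $L \subset A^{F}$. Since $Y$ is nonempty and strongly aperiodic, the remark following the definition of the free part gives $\operatorname{Free}(Y) = H \setminus \{1\}$.

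Next I would simply substitute this identity into the hypothesis: the assumption that $\operatorname{Cl}(g) \cap R_G(H \setminus \{1\}) \neq \varnothing$ for every $g \in G \setminus \{1\}$ becomes, verbatim, the statement that $\operatorname{Cl}(g) \cap R_G(\operatorname{Free}(Y)) \neq \varnothing$ for every $g \in G \setminus \{1\}$. Thus the pair $(H, Y)$ is exactly a witness for the sufficient condition in \Cref{thm}, and we conclude that $G$ admits a nonempty strongly aperiodic SFT.

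I expect no real obstacle here: the corollary is just the specialization of \Cref{thm} to the case where the chosen SFT on the finitely generated subgroup is itself strongly aperiodic, the only point requiring a word being the elementary identification $\operatorname{Free}(Y) = H \setminus \{1\}$, which is immediate from $Y \neq \varnothing$. The value of isolating this case is that it packages \Cref{thm} into a form that is easy to apply in practice — one need only exhibit a finitely generated subgroup with an SA SFT whose roots hit every conjugacy class.
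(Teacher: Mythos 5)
Your proof is correct and matches the paper's (implicit) argument exactly: the corollary is the specialization of \Cref{thm} to the case where $Y$ is itself strongly aperiodic, using the observation that a nonempty SA SFT $Y\subset A^H$ satisfies $\operatorname{Free}(Y)=H\setminus\{1\}$. Nothing further is needed.
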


Notice that for any nonempty subshift $Y\subset A^H$ we have $\operatorname{Free}(Y)\subset H\setminus \{1\}$ and thus the condition $\operatorname{Cl}(g)\cap R_G(\operatorname{Free}(Y)) \neq \varnothing$ always implies that $\operatorname{Cl}(g)\cap R_G(H \setminus \{1\}) \neq \varnothing$. This trivial remark suggests the question of whether there are any examples where $X$ is SA but $Y$ is not. The following example due to Salo (also in an article by Jeandel~\cite{Jeandel2015} with a slightly different construction) shows that this case can indeed occur.

\begin{example}
	A result of Osin~\cite{Osin2010} shows that every countable torsion-free group can be embedded in a $2$-generated group with exactly two conjugacy classes. In particular there is a $2$-generated group $G$ with two conjugacy classes and such that there is $t \in G$ with $\langle t \rangle \simeq \ZZ$. Let $A = \{\symb{a},\symb{b}\}$, $F = \{1,t\}$ and $L = \{ (1\mapsto \symb{a}, t \mapsto \symb{b}),(1\mapsto \symb{b}, t \mapsto \symb{a})\}$, that is, we let $X\subset\{\symb{a},\symb{b}\}^G$ be the SFT which consists of all maps for which $x(gt) \neq x(g)$ for every $g \in G$.
	
	Let $Y \subset \{\symb{a},\symb{b}\}^{\langle t \rangle}$ be the SFT defined by $F$ and $L$ on $\langle F \rangle \simeq \ZZ$. On the one hand, we have that $Y$ is nonempty and consists on the two periodic configurations which alternate symbols, thus $\operatorname{Free}(Y) = \{ t^{2n+1} : n \in \ZZ\}$ consists of the odd powers of $t$, in particular $Y$ is not SA. On the other hand, as there is only one non-trivial conjugacy class, we have that for every $g \in G \setminus \{1\}$, $t \in \operatorname{Cl}(g)\cap R_G(\operatorname{Free}(Y))$ which is nonempty, and thus $X$ is SA by~\Cref{thm}.\hfill\exampleqed
\end{example}

\textbf{Acknowledgments}: The author is grateful to Ville Salo for finding a subtle error in the first version of this paper and suggesting the Osin group example. The author also wishes to thank the referees for suggesting various improvements. The author was supported by the FONDECYT grant 11200037.
\Addresses

\bibliographystyle{abbrv}
\bibliography{ref}

\end{document}